\newtheorem{definition}{Definition}
\newtheorem{theorem}{Theorem}
\newtheorem{lemma}{Lemma}
\author{S.~V. Gryshchuk$^{\ast}$ and S.~A. Plaksa$^{\ast\ast}$
} \crauthor{S.~V. GRYSHCHUK AND   S.~A. PLAKSA}
\begin{document}

\maketit

\address{Institute of Mathematics, National Academy of
Sciences of Ukraine, Tereshchenkivska Str. 3, 01601, Kiev-4,
Ukraine}

\email{$^{\ast}$ serhii.gryshchuk@gmail.com, $^{\ast\ast}$
plaksa62@gmail.com}

\vskip 2mm



\abstract{A commutative algebra $\mathbb{B}$ over the field of
complex numbers with the bases $\{e_1,e_2\}$ satisfying the
conditions $(e_1^2+e_2^2)^2=0$, $e_1^2+e_2^2\ne 0$, is considered.
The algebra $\mathbb{B}$ is associated with the biharmonic
equation. Consider a Schwartz-type boundary value problem on
finding a monogenic function of the type
$\Phi(xe_1+ye_2)=U_{1}(x,y)\,e_1+U_{2}(x,y)\,ie_1+
U_{3}(x,y)\,e_2+U_{4}(x,y)\,ie_2$, $(x,y)\in D$, when values of
two components $U_1$, $U_4$ are given on the boundary of a domain
$D$ lying in the Cartesian plane $xOy$. We develop a method of its
solving which is based on expressions of monogenic functions via
corresponding analytic functions of the complex variable. For a
half-plane and for a disk, solutions are obtained in explicit
forms by means of Schwartz-type integrals.}


\notes{0}{
\subclass{30G35; 31A30} 
\keywords{biharmonic equation;  biharmonic algebra; biharmonic
plane; monogenic function; Schwarz-type bounda\-ry value problem}
 \thank{This research is partially supported by Grant
of Ministry of Education and Science of Ukraine (Project No.
 0116U001528)}}

\section{Monogenic functions in a biharmonic algebra}
\label{intr}
\begin{definition}\label{D:bih-alg}
 An associative commutative two-dimensional algebra $\mathbb B$ with
the unit $1$ over the field of complex numbers $\mathbb C$ is
called {\it biharmonic} \em (\em see \cite{KM-BFf,M-BB}\em) \em if
in $\mathbb B$ there exists a basis $\{e_1,e_2\}$ satisfying the
conditions
\[ (e_1^2+e_2^2)^2=0,\qquad e_1^2+e_2^2\ne 0\,.\]

Such a basis $\{e_1,e_2\}$ is also called {\it biharmonic}.
\end{definition}

In the paper \cite{M-BB} I.~P. Mel'nichenko proved that there
exists the unique biharmonic algebra $\mathbb{B}$, and he
constructed all biharmonic bases in $\mathbb{B}$. Note that the
algebra $\mathbb{B}$ is isomorphic to four-dimensional over the
field of real numbers $\mathbb R$ algebras considered by
A.~Douglis \cite{Douglis-53} and L.~Sobrero~\cite{Sodbero}.

In what follows, we consider a biharmonic basis $\{e_1,e_2\}$ with
the following multiplication table (see \cite{KM-BFf}):
\begin{equation} \label{tab_umn_bb}
e_1=1,\qquad e_2^2=e_1+2ie_2\,,
\end{equation}
where $i$\,\, is the imaginary complex unit.
 We consider also a basis $\{1,\rho\}$ (see \cite{M-BB}), where a
nilpotent element
\begin{equation} \label{rho}
\rho=2e_1+2ie_2
\end{equation}
satisfies the equality\, $\rho^{2}=0$\,.

We use the euclidian norm $\|a\|:=\sqrt{|z_1|^2+|z_2|^2}$ in the
algebra $\mathbb{B}$, where\, $a=z_1e_1+z_2e_2$ and $z_1, z_2\in
\mathbb{C}$.

Consider a {\it biharmonic plane} $\mu:=\{\zeta=x\,e_1+y\,e_2 :
x,y\in\mathbb R\}$ which is a linear span of the elements
$e_1,e_2$ of the biharmonic basis (\ref{tab_umn_bb}) over the
field of real numbers $\mathbb R$. With a domain $D$ of the
Cartesian plane $xOy$ we associate the congruent domain
$D_{\zeta}:= \{\zeta=xe_1+ye_2 : (x,y)\in D\}$ in the biharmonic
plane $\mu$ and the congruent domain $D_{z}:= \{z=x+iy : (x,y)\in
D\}$ in the complex plane $\mathbb{C}$. Its boundaries are denoted
by $\partial D$, $\partial D_{\zeta}$ and $\partial D_z$,
respectively. Let $\overline{D_{\zeta}}$ (or $\overline{D_{z}}$)
be the closure of domain $D_{\zeta}$ (or $D_{z}$). In what
follows, $\zeta=x\,e_1+y\,e_2$,\,\, $z=x+iy$\,\, and\,
$x,y\in\mathbb{R}$.

Any function $\Phi\colon D_{\zeta}\longrightarrow \mathbb{B}$ has
an expansion of the type
\begin{equation}\label{mon-funk}
\Phi(\zeta)=
U_{1}(x,y)\,e_1+U_{2}(x,y)\,ie_1+U_{3}(x,y)\,e_2+U_{4}(x,y)\,ie_2\,,
\end{equation}
where $U_{l}\colon D\longrightarrow \mathbb{R}$, $l=1,2,3,4$, are
real-valued component-functions.
 We shall use the following notation:
$\mathrm{U}_{l}\left[\Phi\right]:=U_{l}$, $l=1,2,3,4$.

\begin{definition}\label{D:monogenic}
A function $\Phi \colon D_{\zeta} \longrightarrow \mathbb{B}$ is
{\em monogenic} in a domain $D_{\zeta}$ if it has the classical
derivative $\Phi'(\zeta)$ at every point $\zeta\in
D_{\zeta}$\emph{:}
\[\Phi'(\zeta):=\lim\limits_{h\to 0,\, h\in \mu}
\bigl(\Phi(\zeta+h)-\Phi(\zeta)\bigr)\,h^{-1}\,.\]
\end{definition}

It is proved in \cite{KM-BFf} that a function $\Phi\colon
D_{\zeta}\longrightarrow \mathbb{B}$ is monogenic in $D_{\zeta}$
if and only if its each real-valued component-function in
(\ref{mon-funk}) is real differentiable in $D$ and the following
analog of the Cauchy~-- Riemann condition is fulfilled:
\[\frac{\partial \Phi(\zeta)}{\partial y}\,e_1=\frac{\partial
\Phi(\zeta)}{\partial x}\,e_2\quad \forall\, \zeta \in
D_{\zeta}\,.\]

All component-functions $U_{l}$, $l=1,2,3,4$, in the expansion
(\ref{mon-funk}) of any monogenic function  $\Phi \colon
D_{\zeta}\longrightarrow \mathbb{B}$ are biharmonic functions
(cf., e.g., \cite{GrPl_umz-09,Conrem_13}), i.e., satisfy the
biharmonic equation in $D$:
    \[ \Delta^{2}U(x,y)\equiv \frac{\partial^{4}U(x,y)}{\partial x^4}+
   2\frac{\partial^{4}U(x,y)}{\partial x^2\partial y^2}+
   \frac{\partial^4 U(x,y)}{\partial y^4} =0.\]

Every monogenic function  $\Phi \colon D_{\zeta}\longrightarrow
\mathbb{B}$ is expressed via two corresponding  analytic functions
$F \colon D_{z}\longrightarrow\mathbb{C}$, $F_{0}\colon
D_{z}\longrightarrow\mathbb{C}$ of the complex variable\, $z$\,
in the form (cf., e.g., \cite{GrPl_umz-09,Conrem_13}):
\begin{equation}\label{kov}
\Phi(\zeta)=F(z)e_1-\left(\frac{iy}{2}\,F'(z)-F_0(z)\right)\rho\
\quad \forall\, \zeta \in D_{\zeta}.
\end{equation}
The equality  (\ref{kov}) establishes  one-to-one correspondence
between monogenic functions  $\Phi$ in the domain  $D_{\zeta}$ and
pairs of complex-valued analytic functions $F,F_{0}$ in the domain
$D_{z}$.

Using the equality  (\ref{rho}), we rewrite the expansion
(\ref{kov}) for all $\zeta\in D_{\zeta}$ in the basis
$\{e_1,e_2\}$:
\begin{equation}\label{kov-e1-e2}
\Phi(\zeta)=\Bigl(F(z)-iyF'(z)+2F_{0}(z)\Bigr)e_1
+i\Bigl(2F_{0}(z)-iyF'(z)\Bigr)e_2\,.
\end{equation}

\section{($k$-$m$)-problem for monogenic functions}
\label{sec:(k-m)-pr}  V.\,F. Kovalev \cite{Kovalov} considered the
following boundary value problem: to find a continuous function
$\Phi : \overline{D_{\zeta}} \longrightarrow\mathbb{B}$ which is
monogenic in a domain $D_{\zeta}$ when values of two
component-functions in (\ref{mon-funk}) are given on the boundary
$\partial D_{\zeta }$, i.e., the following boundary conditions are
satisfied:
\[U_{k}(x,y)=u_{k}(\zeta)\,,\quad
U_{m}(x,y)=u_{m}(\zeta)\qquad\forall\, \zeta \in
\partial D_{\zeta}\]
for $1\le k < m\le 4$\,, where $u_{k}$ and $u_{m}$ are given
functions.

We assume additionally that the sought-for function $\Phi$ has the
limit
\[\lim\limits_{\|\zeta\|\to \infty,\,\zeta\in
D_{\zeta}}\Phi(\zeta)=:\Phi(\infty)\in \mathbb{B}\]
 in the case
where the domain $D_{\zeta}$ is unbounded as well as every given
function $u_l$,\,\, $l\in\{k,m\} $, has a finite limit
\begin{equation} \label{lim_infty}
u_l(\infty):=\lim\limits_{\|\zeta\|\to \infty,\, \zeta\in \partial
D_{\zeta}} u_l(\zeta)
\end{equation}
if $\partial D_{\zeta}$ is unbounded.

We shall call such a problem by the ($k$-$m$)-problem. V.\,F.
Kovalev \cite{Kovalov} called it by a {\it biharmonic Schwartz
problem} owing to its analogy with the classical Schwartz problem
on finding an analytic function of the complex variable when
values of its real part are given on the boundary of domain.

It was established in \cite{Kovalov} that all biharmonic Schwartz
problems are reduced to the main three problems: the (1-2)-problem
or the (1-3)-problem or the (1-4)-problem.

It is shown (see \cite{Kovalov,IJPAM_13,mon-f-bih-BVP-MMAS}) that
the main biharmonic problem is reduced to the (1-3)-problem. In
\cite{IJPAM_13}, we investigated the (1-3)-problem for cases where
$D_{\zeta}$ is either a half-plane or a unit disk in the
biharmonic plane. Its solutions were found in explicit forms with
using of some integrals analogous to the classic Schwarz integral.
In \cite{mon-f-bih-BVP-MMAS}, using a hypercomplex analog of the
Cauchy type integral, we reduced the (1-3)-problem to a system of
integral equations and established sufficient conditions under
which this system has the Fredholm property. It was made for the
case where the boundary of domain belongs to a class being wider
than the class of Lyapunov curves that was usually required in the
plane elasticity theory (cf., e.g.,
\cite{Mikhlin,Mush_upr,Lurie_engl,Kakh,Bogan}).

In \cite{1-4-ArxiV}, there is considered a relation between
(1-4)-problem and boundary value problems of the plane elasticity
theory. Namely, there is considered a problem on finding an
elastic equilibrium for isotropic body occupying $D$ with given
limiting values of partial derivatives $\frac{\partial u}{\partial
x}$\,, $\frac{\partial v}{\partial y}$ for displacements
$u=u(x,y)$\,, $v=v(x,y)$ on the boundary $\partial D$. In
particular, it is shown in \cite{1-4-ArxiV} that such a problem is
reduced to (1-4)-problem.

In this paper we develop methods for solving the (1-4)-problem.
Obtained results are mostly analogous to appropriate results in
\cite{IJPAM_13,mon-f-bih-BVP-MMAS} dealing with the (1-3)-problem,
but in contradistinction to the (1-3)-problem, which is solvable
in a general case if and only if a certain natural condition is
satisfied, the (1-4)-problem is solvable unconditionally.

Note that hypercomplex methods and suitable "analytic" functions
are used for investigation of elliptic partial differential
equations in the papers
\cite{Douglis-53,Sodbero,Diaz-46,Gilb-Hile-74,
Gilb-Wendl-Proc-75,Edenhofer,Hile-79,Sold-ell-vys-por,Sold-Izv-06,Boj-zad-Neymn-Big-Equ}.

 \section{Solving process of (1-4)-problem
 by means analytic functions of the complex
 variable}
 \label{sec:(1-4)-an-f-C}
Consider a method for solving the (1-4)-problem that is based on
the expression (\ref{kov}) of monogenic function by means of
appropriate analytic functions of the complex variable.

For a continuous function $u : \partial D_{\zeta} \longrightarrow
\mathbb{R}$, by $\widehat{u}$ we denote the function defined on
$\partial D_z$ by the equality $\widehat{u}(z)=u(\zeta)$ for all
$z \in \partial D_z$.

In what follows, we assume that the domain $D_{z}$ is simply
connected (bounded or unbounded), and in this case we shall say
that the domain $D_{\zeta}$ is also simply connected.

\begin{lemma}\label{L:pr-1-4-Lem1}
Let $ u_l \colon \partial D_{\zeta} \longrightarrow \mathbb{R}$,
$l \in \{1,4\}$,  be continuous functions and  $\Phi \colon
\overline{D_{\zeta}} \longrightarrow\mathbb{B}$ be a solution of
the \em (1-4)\em-problem. Then the function  $F$ in \eqref{kov} is
a solution of the Schwartz problem on finding a continuous
function  $F \colon  \overline{D_z} \longrightarrow\mathbb{C}$
which is analytic in $D_z$ and satisfies the boundary
condition\emph{:}
\begin{equation}\label{z-Sh-F}
\mathrm{Re}\,F(t)=\widehat{u}_{1}(t)-\widehat{u}_{4}(t) \quad
\forall\, t \in\partial D_z.
\end{equation}
\end{lemma}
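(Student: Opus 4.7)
The plan is to read off the real and imaginary parts of $F$ directly from the explicit basis decomposition of $\Phi$ provided by \eqref{kov-e1-e2}, and then use the boundary data on $U_1$ and $U_4$ to determine the boundary values of $\mathrm{Re}\,F$. Analyticity of $F$ in $D_z$ will come for free from \eqref{kov}, since the one-to-one correspondence stated there guarantees that the $F$ associated with any monogenic $\Phi$ is an analytic function of the complex variable $z$.

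First I would write $F(z)=f_1(x,y)+if_2(x,y)$ and $F_{0}(z)=g_1(x,y)+ig_2(x,y)$, and use the Cauchy--Riemann equations to set $F'(z)=p+iq$ with $p=\partial f_1/\partial x$ and $q=\partial f_2/\partial x$. Substituting these into \eqref{kov-e1-e2} and equating coefficients of $e_1,\, ie_1,\, e_2,\, ie_2$ with those in \eqref{mon-funk} gives the four explicit formulas
\begin{align*}
U_1 &= f_1 + yq + 2g_1, & U_2 &= f_2 - yp + 2g_2,\\
U_3 &= yp - 2g_2,       & U_4 &= 2g_1 + yq.
\end{align*}
The central algebraic observation is that the contributions from $F_0$ and from $yF'$ cancel in the specific combinations
\[
U_1 - U_4 = f_1 = \mathrm{Re}\,F, \qquad U_2 + U_3 = f_2 = \mathrm{Im}\,F.
\]

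With these identities in hand the proof concludes quickly: the assumed continuity of $\Phi$ on $\overline{D_{\zeta}}$ forces each component-function $U_l$ to be continuous on $\overline{D}$, so $F=(U_1-U_4)+i(U_2+U_3)$ extends continuously to $\overline{D_z}$; the boundary conditions $U_1(x,y)=u_1(\zeta)$ and $U_4(x,y)=u_4(\zeta)$ on $\partial D_{\zeta}$ then yield $\mathrm{Re}\,F(t)=\widehat{u}_1(t)-\widehat{u}_4(t)$ for all $t\in\partial D_z$. The only mildly delicate point is that the (1-4)-problem prescribes only $U_1$ and $U_4$ on the boundary, so continuity of $F$ on the closure does not follow from the prescribed data alone; the second identity $\mathrm{Im}\,F=U_2+U_3$ is what resolves this, since it lets us leverage the continuity of the full function $\Phi$ rather than just its two prescribed components.
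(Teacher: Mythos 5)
Your proof is correct and rests on the same key identity as the paper's, namely $F=(U_1-U_4)+i(U_2+U_3)$, followed by the same continuity argument; the paper merely derives that identity more compactly by applying the multiplicative functional $f$ with $f(1)=1$, $f(\rho)=0$ (hence $f(e_2)=i$) to \eqref{mon-funk} and \eqref{kov}, instead of expanding \eqref{kov-e1-e2} in components as you do. No changes needed.
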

\begin{proof}
Consider the linear continuous multiplicative functional $f \colon
{\mathbb B}\longrightarrow{\mathbb C}$  such that $f(\rho)=0$ and
$f(1)=1$. Then, it follows from the equality (\ref{rho}) that
$f(e_2)=i$.

Thus,  acting by the functional  $f$ on the equalities
(\ref{mon-funk}), (\ref{kov}), we obtain the relations
\[F(z)=f\Bigl(\Phi(\zeta)\Bigr)=U_1(x,y)-U_4(x,y)+i\Bigl(U_2(x,y)+U_3(x,y)\Bigr)
\quad\forall\,z\in D_z\,.\]

Inasmuch as the functional $f$ is continuous and $\Phi$ is a
solution of the (1-4)-problem, the analytic function $F$ permits a
continuous extendibility to the boundary $\partial D_z$ and
satisfies the boundary condition (\ref{z-Sh-F}).
\end{proof}

\begin{lemma}\label{L:pr-1-4-Lem2}
 Let the conditions of Lemma \em
\ref{L:pr-1-4-Lem1} \em be satisfied and, furthermore, the
function $yF'(z)$ permit a continuous extendibility to the
boundary $\partial D_z$,  where $F$ is a solution of the Schwartz
problem with boundary condition  \eqref{z-Sh-F}. Then the function
$F_0$ in \eqref{kov} is a solution of the Schwartz problem on
finding a continuous function  $F_0 \colon \overline{D_z}
\longrightarrow\mathbb{C}$, which is analytic in $D_z$ and
satisfies the boundary condition
\begin{equation}\label{z-Sh-F_0}
\mathrm{Re}\,F_0(t)=\frac{1}{2}\Bigl(\widehat{u}_{4}(t)-\mathrm{Im}\,t\,\,\mathrm{Im}\,
\lim\limits_{z\to t,\, z\in D_z} F'(z)\Bigr) \quad \forall\, t
\in\partial D_z.
\end{equation}
\end{lemma}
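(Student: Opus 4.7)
The plan is to mimic the derivation of Lemma~\ref{L:pr-1-4-Lem1}, but instead of applying the multiplicative functional $f$ to (\ref{kov}) I would work directly from the expanded form (\ref{kov-e1-e2}) and isolate the component $U_4$. Comparing (\ref{kov-e1-e2}) with the decomposition (\ref{mon-funk}) shows that the complex coefficient of $e_2$ in $\Phi(\zeta)$ equals $U_3(x,y)+iU_4(x,y)$ on one side and $i\bigl(2F_0(z)-iyF'(z)\bigr)=2iF_0(z)+yF'(z)$ on the other. Taking imaginary parts of this equality gives the pointwise identity
\[ U_4(x,y)=2\,\mathrm{Re}\,F_0(z)+y\,\mathrm{Im}\,F'(z)\qquad\forall\,\zeta\in D_{\zeta}, \]
which rearranges to
\[ \mathrm{Re}\,F_0(z)=\tfrac12\bigl(U_4(x,y)-y\,\mathrm{Im}\,F'(z)\bigr). \]

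Next I would justify that $F_0$ itself admits a continuous extension to $\partial D_z$, so that the Schwartz problem (\ref{z-Sh-F_0}) is well posed. This is the only step that really uses the extra hypothesis: from $2iF_0(z)=U_3(x,y)+iU_4(x,y)-yF'(z)$, the continuity of $\Phi$ up to $\partial D_{\zeta}$ ensures that $U_3$ and $U_4$ extend continuously to $\partial D_z$ via the map $\zeta\leftrightarrow z$, and by assumption $yF'(z)$ extends continuously as well; hence $F_0$ does too. Analyticity of $F_0$ in $D_z$ is not an issue — it is part of the one-to-one correspondence following (\ref{kov}).

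Having both continuity up to the boundary and the identity above, I would pass to the limit $z\to t\in\partial D_z$, noting that $y=\mathrm{Im}\,z\to\mathrm{Im}\,t$ and $U_4(x,y)\to \widehat{u}_4(t)$, to obtain
\[ \mathrm{Re}\,F_0(t)=\tfrac12\Bigl(\widehat{u}_4(t)-\mathrm{Im}\,t\,\,\mathrm{Im}\lim_{z\to t,\,z\in D_z}F'(z)\Bigr), \]
which is precisely (\ref{z-Sh-F_0}).

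There is no real obstacle here; the only subtlety is why the hypothesis on $yF'(z)$ is genuinely needed. It enters in two places: to guarantee that the right-hand side of (\ref{z-Sh-F_0}) makes sense as a continuous function on $\partial D_z$, and to upgrade the conclusion from ``$\mathrm{Re}\,F_0$ has a continuous boundary extension'' to ``$F_0$ itself has a continuous boundary extension,'' so that we can legitimately describe $F_0$ as the solution of a Schwartz problem in the same function class as in Lemma~\ref{L:pr-1-4-Lem1}.
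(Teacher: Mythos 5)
Your proposal is correct and takes essentially the same route as the paper: both extract the identity $U_4=2\,\mathrm{Re}\,F_0(z)+y\,\mathrm{Im}\,F'(z)$ from \eqref{kov-e1-e2} and then use the continuous extendibility of $yF'(z)$ (together with the continuity of $\Phi$ up to the boundary) to pass to the limit and obtain \eqref{z-Sh-F_0}. Your write-up merely spells out the boundary-extension argument for $F_0$ that the paper's proof leaves implicit.
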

\begin{proof}
We obtain the equality
\[U_4[\Phi(\zeta)]=
2\,\mathrm{Re}\,F_0(z)+y\,\mathrm{Im}\,F'(z)\quad \forall\,
\zeta\in D_{\zeta}\]
 as a corollary of the equality (\ref{kov-e1-e2}).

Under the conditions of Lemma~\ref{L:pr-1-4-Lem1}, the Schwartz
problem with boundary condition \eqref{z-Sh-F} is solvable. Taking
into account that a function  $yF'(z)$ permits a continuous
extendibility to the boundary $\partial D_z$, we conclude that the
function $F_0$ is a solution of the Schwartz problem with boundary
condition (\ref{z-Sh-F_0}).
\end{proof}

By virtue of Lemmas \ref{L:pr-1-4-Lem1} and \ref{L:pr-1-4-Lem2},
we can assert that a solving of the (1-4)-problem is reduced to
successive solving processes of two Schwartz problems with
boundary conditions (\ref{z-Sh-F}), (\ref{z-Sh-F_0}),
respectively. Therefore, we get the following theorem.

\begin{theorem}\label{Th:resh-(1-4)-an-f}
Let $ u_l \colon \partial D_{\zeta} \longrightarrow \mathbb{R}$,
$l \in \{1,4\}$, be continuous functions and, furthermore, the
function $yF'(z)$ permit a continuous extendibility to the
boundary $\partial D_z$, where  $F$  is a solution of the Schwartz
problem with boundary condition \eqref{z-Sh-F}. Then a solution of
the \em (1-4)\em -problem is expressed by the formula \eqref{kov}
or, the same, by the formula \eqref{kov-e1-e2}, where the function
$F_0$ is a solution of the Schwartz problem with boundary
condition \eqref{z-Sh-F_0}. \end{theorem}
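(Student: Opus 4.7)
\emph{Proof plan.} The theorem is the converse direction of Lemmas \ref{L:pr-1-4-Lem1} and \ref{L:pr-1-4-Lem2}: instead of starting from a (1-4)-solution $\Phi$ and extracting $F,F_0$, I will start from the pair $(F,F_0)$ produced by the two successive Schwartz problems and build a solution $\Phi$ of the (1-4)-problem through formula \eqref{kov}. The verification is, essentially, the two lemmas read in reverse.

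Concretely, the plan is as follows. First, solve the classical Schwartz problem with boundary datum \eqref{z-Sh-F}, obtaining a function $F$ analytic in $D_z$ and continuous on $\overline{D_z}$. The hypothesis that $yF'(z)$ admits a continuous extension to $\partial D_z$ ensures that the right-hand side of \eqref{z-Sh-F_0} is a well-defined continuous real-valued function on $\partial D_z$, so the second Schwartz problem is solvable and yields $F_0$ analytic in $D_z$ and continuous on $\overline{D_z}$. Define $\Phi$ on $\overline{D_\zeta}$ by formula \eqref{kov}.

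Then I need to check that this $\Phi$ satisfies all the requirements of the (1-4)-problem. Monogenicity of $\Phi$ in $D_\zeta$ is automatic, because \eqref{kov} parametrises every monogenic function by a pair of complex-analytic ones. Continuity of $\Phi$ up to $\partial D_\zeta$ follows from continuity of $F$, $F_0$ and $yF'(z)$ on the corresponding closures (in the unbounded case the limits at infinity are inherited from those of $u_1,u_4$ via \eqref{lim_infty}). For the boundary conditions, expansion \eqref{kov-e1-e2} gives
\[
\mathrm{U}_{1}[\Phi(\zeta)]=\mathrm{Re}\,F(z)+y\,\mathrm{Im}\,F'(z)+2\,\mathrm{Re}\,F_0(z),
\]
\[
\mathrm{U}_{4}[\Phi(\zeta)]=y\,\mathrm{Im}\,F'(z)+2\,\mathrm{Re}\,F_0(z);
\]
evaluating at $z=t\in\partial D_z$, where $y=\mathrm{Im}\,t$, and substituting \eqref{z-Sh-F} and \eqref{z-Sh-F_0}, one obtains $\mathrm{U}_{4}=\widehat{u}_{4}$ and $\mathrm{U}_{1}=(\widehat{u}_{1}-\widehat{u}_{4})+\widehat{u}_{4}=\widehat{u}_{1}$ on $\partial D_z$, i.e.\ exactly the required boundary identities on $\partial D_\zeta$.

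The delicate point, rather than any of the individual algebraic identities, is precisely the assumed continuous extendibility of $yF'(z)$ to $\partial D_z$: it is what guarantees that the datum \eqref{z-Sh-F_0} of the second Schwartz problem is a \emph{bona fide} continuous real-valued boundary function, and, via the same expansion \eqref{kov-e1-e2}, it is also what is needed for $\Phi$ itself to extend continuously from $D_\zeta$ to $\overline{D_\zeta}$. Once this regularity input is granted, the remainder of the proof amounts to reading the chains of identities in Lemmas~\ref{L:pr-1-4-Lem1} and \ref{L:pr-1-4-Lem2} in the converse direction, with no further analytic content to supply.
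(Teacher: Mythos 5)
Your proposal is correct and takes essentially the same route as the paper: the authors obtain Theorem~\ref{Th:resh-(1-4)-an-f} directly as a consequence of Lemmas~\ref{L:pr-1-4-Lem1} and~\ref{L:pr-1-4-Lem2} (reduction of the (1-4)-problem to the two successive Schwartz problems with data \eqref{z-Sh-F} and \eqref{z-Sh-F_0}), and your explicit check via \eqref{kov-e1-e2} that $\mathrm{U}_1[\Phi]=\mathrm{Re}\,F+y\,\mathrm{Im}\,F'+2\,\mathrm{Re}\,F_0$ and $\mathrm{U}_4[\Phi]=y\,\mathrm{Im}\,F'+2\,\mathrm{Re}\,F_0$ reproduce $\widehat{u}_1$, $\widehat{u}_4$ on the boundary is precisely the computation underlying those lemmas, read in the converse direction. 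If anything, you are slightly more careful than the paper, which leaves this sufficiency verification implicit.
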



A particular case of Theorem~\ref{Th:resh-(1-4)-an-f} is the
following theorem, where all solutions of the homogeneous
(1-4)-problem, i.e., with $u_1=u_4 \equiv 0$, are described  for an
arbitrary (bounded or unbounded) simply connected domain
$D_{\zeta}$.

 \begin{theorem}\label{Th:odnor-(1-4)}
The general solution of the homogeneous  \emph{(1-4)}-problem for
an arbitrary simply connected domain $D_{\zeta}$ is expressed by
the formula
\begin{equation}\label{form-odn-(1-4)}
\Phi(\zeta)=a_1 ie_1+a_2 e_2,
\end{equation}
 where $a_1$, $a_2$ are any real constants.
 \end{theorem}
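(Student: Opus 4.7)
The plan is to apply Theorem \ref{Th:resh-(1-4)-an-f} with $u_1 \equiv u_4 \equiv 0$ and to solve the two resulting Schwarz problems in sequence, after which the algebraic rewriting via $\rho=2e_1+2ie_2$ will produce the claimed normal form.

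First, I would handle the Schwarz problem for $F$ with boundary condition (\ref{z-Sh-F}): since $\widehat u_1 = \widehat u_4 = 0$, the condition reads $\mathrm{Re}\,F(t)=0$ on $\partial D_z$. For a simply connected domain $D_z$ (bounded or unbounded, taking into account the finite limit at infinity imposed in the setup of the $(k$-$m)$-problem), the only analytic functions satisfying this homogeneous boundary condition are the pure imaginary constants, so $F(z)\equiv ic_1$ with $c_1\in\mathbb R$. Consequently $F'(z)\equiv 0$, which means $yF'(z)\equiv 0$ extends continuously to $\partial D_z$, and the hypotheses of Theorem \ref{Th:resh-(1-4)-an-f} are fulfilled automatically.

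Next, the Schwarz problem (\ref{z-Sh-F_0}) for $F_0$ becomes $\mathrm{Re}\,F_0(t)=\tfrac{1}{2}(0-\mathrm{Im}\,t\cdot 0)=0$ on $\partial D_z$, so the same uniqueness argument yields $F_0(z)\equiv ic_2$ with $c_2\in\mathbb R$. Substituting into (\ref{kov}) gives
\[
\Phi(\zeta)=ic_1 e_1 - (0-ic_2)\rho = ic_1 e_1 + ic_2\rho,
\]
and invoking (\ref{rho}) we compute $ic_2\rho = 2ic_2 e_1 - 2c_2 e_2$, whence
\[
\Phi(\zeta)=i(c_1+2c_2)\,e_1 + (-2c_2)\,e_2.
\]
Setting $a_1:=c_1+2c_2\in\mathbb R$ and $a_2:=-2c_2\in\mathbb R$ produces the representation (\ref{form-odn-(1-4)}).

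Finally, I would verify the reverse inclusion: for any $a_1,a_2\in\mathbb R$, the function $\Phi(\zeta)=a_1 ie_1+a_2 e_2$ is a constant element of $\mathbb{B}$, hence trivially monogenic on $D_\zeta$ with $\Phi'\equiv 0$ and $\Phi(\infty)=a_1 ie_1+a_2 e_2$; reading off components in (\ref{mon-funk}) gives $U_1\equiv 0$ and $U_4\equiv 0$, so the homogeneous boundary conditions are satisfied. The main obstacle, more conceptual than technical, is to be precise about the uniqueness of the homogeneous Schwarz problem uniformly for bounded and unbounded simply connected $D_z$; this is exactly where the assumed finiteness of $\Phi(\infty)$ and of $u_l(\infty)$ in (\ref{lim_infty}) is needed, since without them nontrivial entire functions (e.g.\ $F(z)=iz$) would also satisfy $\mathrm{Re}\,F=0$ on an unbounded boundary and spoil the description.
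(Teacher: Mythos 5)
Your proposal is correct and follows essentially the same route as the paper's own proof: invoke Theorem \ref{Th:resh-(1-4)-an-f}, solve the two homogeneous Schwartz problems to get purely imaginary constants $F\equiv ic_1$ and $F_0\equiv ic_2$, and rewrite via $\rho=2e_1+2ie_2$ to reach the form \eqref{form-odn-(1-4)}. Your added care about the continuous extendibility of $yF'(z)$ (trivially satisfied since $F'\equiv 0$) and about uniqueness of the homogeneous Schwartz problem on unbounded domains only makes explicit what the paper leaves implicit.
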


\begin{proof} By Theorem \ref{Th:resh-(1-4)-an-f}, a solving process of
the homogeneous (1-4)-problem consists of consecutive finding of
solutions of two homogeneous Schwartz problems, viz.:

a) to find an analytic in $D_z$ function $F$ satisfying the
boundary condition $\mathrm{Re}\,F(t)= 0$ for all $t\in\partial
D_{z}$. As a result, we have  $F(z)=ai$, where $a$ is an arbitrary
real constant;

b) to find similarly an analytic in $D_z$ function $F_0$
satisfying the boundary condition $\mathrm{Re}\,F_0(t)= 0$ for all
$t\in\partial D_{z}$.

Consequently, getting a general solution of the homogeneous
(1-4)-problem in the form (\ref{kov-e1-e2}), we can rewrite it in
the form (\ref{form-odn-(1-4)}).
\end{proof}

Having an intention to develop a method for solving the
inhomogeneous (1-4)-problem without an essential in Theorem
\ref{Th:resh-(1-4)-an-f} assumption that the function  $yF'(z)$
permits a continuous extendibility to the boundary  $\partial
D_z$, we consider primarily questions about finding solutions of
the inhomogeneous (1-4)-problem for some canonical domains,
namely: a half-plane and a disk.

\section{(1-4)-problem for a half-plane.}\label{(1-4)-half-plane}
Consider the (1-4)-problem in the case where the domain
$D_{\zeta}$ is the half-plane $\Pi^+:=\{\zeta=xe_1+ye_2 : y>0\}$.

Our aim is to find an explicit formula of solution of the
(1-4)-problem for the half-plane $\Pi^+$ under the assumption that
for every given function $u_l \colon \mathbb{R}\longrightarrow
\mathbb{R}$, $l\in\{1,4\}$, its modulus of continuity
\[\omega_{\mathbb R}(u_l, \varepsilon)=\sup\limits_{\tau_1, \tau_2 \in \mathbb{R} : |\tau_1-\tau_2|\le \varepsilon }
\left|u_l(\tau_1)-u_l(\tau_2)\right| \]
 and the local centered
(with respect to the infinitely remote point) modulus of
continuity
\[\omega_{\mathbb R, \infty}(u_l, \varepsilon)=\sup\limits_{\tau \in \mathbb{R} : |\tau|\ge 1/\varepsilon }
\left|u_l(\tau)-u_l(\infty)\right|\]
 satisfy the Dini conditions
\begin{equation} \label{usl_Dini}
\int\limits_{0}^{1}\frac{\omega_{\mathbb R}(u_l,
\eta)}{\eta}\,d\,\eta<\infty,
\end{equation}
 \begin{equation} \label{usl_Dini+infty}
\int\limits_{0}^{1}\frac{\omega_{\mathbb R,  \infty}(u_l,
\eta)}{\eta}\,d\,\eta<\infty,
\end{equation}

In the following theorem all integrals along the real axis are
understood in the sense of their Cauchy principal values.

\begin{theorem} \label{Th:Z_1-4}
 Let every function  $u_l \colon
\mathbb{R}\longrightarrow \mathbb{R}$, $l\in\{1,4\}$, have a
finite limit of the type \eqref{lim_infty} and the conditions
\eqref{usl_Dini}, \eqref{usl_Dini+infty} be satisfied. Then the
general solution of the {\emph{(1-4)}}-problem for the half-plane
$\Pi^{+}$ is expressed by the formula
 \begin{equation} \label{sol_1-4-pivpl}
 \Phi(\zeta)=S_{\Pi^+}[u_1](\zeta)\,e_1+S_{\Pi^+}[u_4](\zeta)\,ie_2 +a_1 ie_1+a_2
 e_2\,,
 \end{equation}
where
 \[ S_{\Pi^+}[u_l](\zeta):= \frac{1}{\pi
i}\int\limits_{-\infty}^{+\infty}\frac{u_l(t)(1+t\zeta)}{(t^2+1)}(t-\zeta)^{-1}\,dt\quad
\forall\, \zeta\in \Pi^{+}\,,\quad l\in\{1,4\},\]
 and \, $a_1$, $a_2$ are any real constants.
\end{theorem}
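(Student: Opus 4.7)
The plan is to invoke Theorem \ref{Th:resh-(1-4)-an-f} to reduce the (1-4)-problem on $\Pi^+$ to two successive classical Schwartz problems for analytic functions $F$ and $F_0$ in the upper half-plane $D_z:=\{z\in\mathbb{C}:\mathrm{Im}\,z>0\}$. A crucial simplification is that $\partial D_z=\mathbb{R}$, so $\mathrm{Im}\,t=0$ for every boundary point and condition \eqref{z-Sh-F_0} collapses to $\mathrm{Re}\,F_0(t)=\tfrac12\widehat{u}_4(t)$, eliminating the dependence on $F'$. The two problems therefore decouple: find analytic $F$ with $\mathrm{Re}\,F(t)=\widehat{u}_1(t)-\widehat{u}_4(t)$ and analytic $F_0$ with $\mathrm{Re}\,F_0(t)=\tfrac12\widehat{u}_4(t)$ on $\mathbb{R}$.

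Next, I would solve each Schwartz problem by the classical modified Schwartz integral for the upper half-plane: under the Dini conditions \eqref{usl_Dini} and \eqref{usl_Dini+infty}, the function
\[F_l(z):=\frac{1}{\pi i}\int_{-\infty}^{+\infty}\frac{u_l(t)(1+tz)}{(t^2+1)(t-z)}\,dt,\qquad z\in D_z,\]
(the complex-variable counterpart of $S_{\Pi^+}[u_l]$) is analytic in $D_z$, extends continuously to $\overline{D_z}$, and satisfies $\mathrm{Re}\,F_l(t)=u_l(t)$ on $\mathbb{R}$. Hence the general solutions of the two Schwartz problems are $F(z)=F_1(z)-F_4(z)+ic_1$ and $F_0(z)=\tfrac12 F_4(z)+ic_0$ for arbitrary real constants $c_1,c_0$.

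Substituting these into the representation \eqref{kov} and using the pivotal algebraic identity
\[S_{\Pi^+}[u_l](\zeta)=F_l(z)-\tfrac{iy}{2}\rho\,F_l'(z),\]
which follows from $\zeta=z-(iy/2)\rho$, $\rho^2=0$, and direct expansion of $(t-\zeta)^{-1}$ (combined with the elementary identity $\frac{1+tz}{(t-z)^2}+\frac{t}{t-z}=\frac{1+t^2}{(t-z)^2}$), together with the relation $ie_2=\rho/2-1$ extracted from \eqref{rho}, transforms the particular solution into $S_{\Pi^+}[u_1](\zeta)e_1+S_{\Pi^+}[u_4](\zeta)ie_2$; the remaining constant $ic_1e_1+ic_0\rho$ rearranges, via \eqref{rho} once more, into $a_1ie_1+a_2e_2$ with $a_1=c_1+2c_0$ and $a_2=-2c_0$, which matches the general homogeneous solution from Theorem \ref{Th:odnor-(1-4)}.

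The principal analytic obstacle is verifying the hypothesis of Theorem \ref{Th:resh-(1-4)-an-f} that $yF'(z)$ extends continuously to $\mathbb{R}$, and more generally that $F_l$ and $F_l'$ behave well on $\overline{D_z}$. This is precisely the role of the two Dini conditions: \eqref{usl_Dini} controls the local modulus of continuity and yields continuous non-tangential boundary values of $F_l$ and $F_l'$ at finite points of $\mathbb{R}$, while \eqref{usl_Dini+infty} handles the behavior at infinity, making the Cauchy principal value integrals converge and depend continuously on the boundary point. Once those continuity facts are established, the remainder of the proof is algebraic manipulation in $\mathbb{B}$ modulo $\rho^2=0$.
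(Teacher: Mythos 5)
Your argument is correct in outline but takes a genuinely different route from the paper. The paper does \emph{not} pass through Theorem~\ref{Th:resh-(1-4)-an-f} here: it deliberately avoids the hypothesis on $yF'(z)$ (see the remark preceding Section~\ref{(1-4)-half-plane}) and instead verifies directly that $S_{\Pi^+}[u_1](\zeta)\,e_1+S_{\Pi^+}[u_4](\zeta)\,ie_2$ solves the inhomogeneous problem, by quoting the boundary-limit formulas for the hypercomplex integral $S_{\Pi^+}[u_l]$ proved in Theorem~1 of \cite{IJPAM_13} (the limit equals $u_l(\xi)$ plus a purely imaginary term, so it contributes only to $U_1$, respectively only to $U_4$ after multiplication by $ie_2$), and then adds the general homogeneous solution \eqref{form-odn-(1-4)}. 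You instead reconstruct the formula through the reduction to two complex Schwartz problems, using the nice observation that $\mathrm{Im}\,t=0$ on $\partial D_z=\mathbb{R}$ decouples \eqref{z-Sh-F_0} from $F'$, and then recombine via $\zeta=z-\tfrac{iy}{2}\rho$ and $\rho^2=0$; I checked your algebra (the identity $S_{\Pi^+}[u_l](\zeta)=F_l(z)-\tfrac{iy}{2}\rho F_l'(z)$, the substitution $ie_2=\rho/2-1$, and the constants $a_1=c_1+2c_0$, $a_2=-2c_0$) and it is sound. What your route buys is a derivation rather than a verification, making transparent where the formula comes from; what it costs is that you must actually prove the boundary behaviour you defer to the end, and this is where all the analysis lives (the paper outsources it to \cite{IJPAM_13}). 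One caveat there: your claim that the Dini conditions yield continuous boundary values of $F_l'$ is an overstatement and is false in general; what is true, and all that is needed, is the weaker estimate $|F_l'(x+iy)|\le C\,\omega_{\mathbb R}(u_l,y)/y$, whence $yF_l'(z)\to 0$ as $z$ tends to a point of $\mathbb{R}$, which is exactly the continuous extendibility of $yF'(z)$ required by Theorem~\ref{Th:resh-(1-4)-an-f}. You should also say explicitly that uniqueness of the two Schwartz problems up to imaginary constants uses the prescribed finite limits at infinity (otherwise $F(z)=-iz$ would be a spurious homogeneous solution in the half-plane).
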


\begin{proof}
For each function  $u_l$, $l\in\{1,4\}$, the following equalities
are fulfilled
\[\lim\limits_{\zeta\to \xi,\,
\zeta\in\Pi^+}S_{\Pi^+}[u_l](\zeta)=u_l(\xi)+\frac{1}{\pi
i}\int\limits_{-\infty}^{\infty}\frac{u_l(t)}{t^2+1}\frac{1+t\xi}{t-\xi}\,dt
\quad \forall\,\xi \in \mathbb{R}\,,\]
\[\lim\limits_{\|\zeta\|\to \infty,\,
\zeta\in\Pi^+}S_{\Pi^+}[u_l](\zeta)=u_l(\infty)-\frac{1}{\pi
i}\int\limits_{-\infty}^{\infty}u_l(t)\frac{t}{t^2+1}\,dt\,,\]
which are proved in Theorem~1 of the paper  \cite{IJPAM_13}. It
follows from these equalities that the function
 \begin{equation} \label{sol_1-ch-1-4}
\Phi(\zeta)=S_{\Pi^+}[u_1](\zeta)\,e_1+S_{\Pi^+}[u_4](\zeta)\,ie_2
\end{equation}
is a solution of the  (1-4)-problem for the half-plane $\Pi^+$.

The general solution of the (1-4)-problem in the form
(\ref{sol_1-4-pivpl}) is obtained by summarizing the particular
solution (\ref{sol_1-ch-1-4}) of the inhomogeneous (1-4)-problem
and the general solution  (\ref{form-odn-(1-4)}) of the
homogeneous (1-4)-problem.
\end{proof}

Note that in \cite{Kovalov} the (1-4)-problem for the half-plane
is solved under complementary assumptions, viz.: the function
$u_l$ belongs to a H\"{o}lder space and $u_l(\infty)=0$ for
$l\in\{1,4\}$, that imply the conditions \eqref{usl_Dini},
\eqref{usl_Dini+infty}.

\section{(1-4)-problem for a disk}\label{(1-4)-disk}
Now, let $D_{\zeta}:=\{\zeta=xe_1+ye_2 : \|\zeta\|\le 1\}$ be the
unit disk in the biharmonic plane $\mu$ and $D_z:=\{z=x+iy : |z|\le
1\}$ be the unit disk in the complex plane $\mathbb{C}$.

In order to construct a solution of the (1-4)-problem for the disk
$D_{\zeta}$, as well as in the case of (1-3)-problem in the paper
\cite{IJPAM_13}, we use the integral
\begin{equation}\label{big_int_Sw_c}
S_{D_{\zeta}}[u](\zeta):= \frac{1}{2\pi i}\int\limits_{\partial
D_{\zeta}}
u(\tau)(\tau+\zeta)(\tau-\zeta)^{-1}\,\tau^{-1}\,d\tau\qquad
\forall\, \zeta\in D_{\zeta}
\end{equation}
being an analog of the complex Schwartz-type integral.

It is proved in \cite{IJPAM_13} that if the modulus of continuity
\[\omega(u,\varepsilon):=\sup\limits_{\zeta_1, \zeta_2 \in\partial
D_{\zeta} : \|\zeta_1-\zeta_2\|\le \varepsilon}
\|u(\zeta_1)-u(\zeta_2)\| \]
 of the function $u : \partial
D_{\zeta} \longrightarrow \mathbb{R}$ satisfies the Dini condition
\begin{equation} \label{usl_Dini_c}
\int\limits_{0}^{1}\frac{\omega(u, \eta)}{\eta}\,d\,\eta
<\infty\,,
\end{equation}
then the integral (\ref{big_int_Sw_c}) has limiting values on
$\partial D_{\zeta}$. Here we rewrite a formula for the mentioned
limiting values (cf. the formulas (25), (26) in \cite{IJPAM_13})
in the following form
\[ \lim\limits_{\xi\to\zeta,\, \xi\in D_{\zeta}} S_{D_{\zeta}}[u](\xi)=u(\zeta)\,e_1+S_{0}[\widehat{u}](z)e_1+
\]
\begin{equation} \label{gr_big_Sw_c}
+\biggl(\frac{x-iy}{2\pi}\int\limits_{\partial D_z}
\frac{\widehat{u}(t)}{t^2}\,dt+
\frac{1}{2\pi}\int\limits_{\partial D_z}
\frac{\widehat{u}(t)}{t^3}\,dt\biggr)(e_2-ie_1)
\quad\forall\,\zeta \in \partial D_{\zeta}\,,
\end{equation}
where
 \[S_0[\widehat{u}](z):=\frac{1}{2\pi i}\,\lim_{\varepsilon\to 0+0} \int\limits_{\{t\in\partial
D_z\, : \, |t-z|\ge\varepsilon\}
}\widehat{u}(t)\,\frac{t+z}{t-z}\,\frac{dt}{t}\,, \qquad
z\in\partial D_{z}\,,\]

An explicit formula for solution of the (1-4)-problem for the unit
disk is obtained in the following theorem.

\begin{theorem} \label{L:pr-1-4-M}
Let  functions $ u_l \colon \partial D_{\zeta} \longrightarrow
\mathbb{R}$, $l\in \{1,4\}$, satisfy conditions of the type
\eqref{usl_Dini_c}. Then the general solution of \em (1-4)\em
-problem is expressed in the form
\[\Phi(\zeta)=S_{D_{\zeta}}[u_1](\zeta)\,e_1+S_{D_{\zeta}}[u_4](\zeta)\,ie_2+\]
\begin{equation} \label{sol_1-4_d} +
\Bigl((b_1+ib_2)\zeta+b\Bigr)(e_1+ie_2)+ a_1\,ie_1+a_2\,e_2\,,
 \end{equation}
where
 \[b_1:=-\frac{1}{2\pi}\,\mathrm{Im}\,\int\limits_{\partial
D_z}\frac{\widehat{u}_{1}(t)-\widehat{u}_{4}(t)}{t^2}\,dt\,,\qquad
b_2:=-\frac{1}{2\pi}\,\mathrm{Re}\,\int\limits_{\partial
D_z}\frac{\widehat{u}_{1}(t)-\widehat{u}_{4}(t)}{t^2}\,dt\,,\]
\[b:=-\frac{1}{2\pi}\,\mathrm{Im}\,\int\limits_{\partial
D_z}\frac{\widehat{u}_{1}(t)-\widehat{u}_{4}(t)}{t^3}\,dt\,,\]
 and $a_1, a_2$ are any real constants.
\end{theorem}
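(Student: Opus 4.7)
My plan is to adapt the strategy of Theorem~\ref{Th:Z_1-4} by first constructing a natural particular solution from the Schwartz-type integrals, then correcting it by a low-dimensional family of monogenic functions to match the data exactly, and finally adding the homogeneous solutions supplied by Theorem~\ref{Th:odnor-(1-4)}. Concretely, the starting candidate is
\[
\Psi(\zeta) := S_{D_{\zeta}}[u_1](\zeta)\,e_1 + S_{D_{\zeta}}[u_4](\zeta)\,ie_2.
\]
The function $\Psi$ is monogenic in $D_{\zeta}$ because the kernel in (\ref{big_int_Sw_c}) is monogenic in $\zeta$, and under the Dini hypothesis (\ref{usl_Dini_c}) it extends continuously to $\partial D_{\zeta}$ with limiting values given by formula (\ref{gr_big_Sw_c}).

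To read off $\mathrm{U}_1[\Psi]$ and $\mathrm{U}_4[\Psi]$ on $\partial D_{\zeta}$, I would expand $(\cdot)\,e_1$ and $(\cdot)\,ie_2$ of the right-hand side of (\ref{gr_big_Sw_c}) in the basis $\{e_1,ie_1,e_2,ie_2\}$ using the rule $e_2^2 = e_1 + 2ie_2$. Setting
\[
B_l(z) := \frac{x-iy}{2\pi}\int_{\partial D_z}\frac{\widehat u_l(t)}{t^2}\,dt + \frac{1}{2\pi}\int_{\partial D_z}\frac{\widehat u_l(t)}{t^3}\,dt
\]
and invoking the classical Schwartz formula (whence $\mathrm{Re}\,S_0[\widehat u](z)=0$ for $z\in\partial D_z$, which follows from Sokhotski--Plemelj applied to the kernel $(t+z)/(t(t-z))$), the bookkeeping is expected to yield
\[
\mathrm{U}_1[\Psi] - u_1 \;=\; \mathrm{U}_4[\Psi] - u_4 \;=\; \mathrm{Im}\,B_1(z) - \mathrm{Im}\,B_4(z) \qquad \text{on } \partial D_{\zeta}.
\]
The decisive observation is that the same residual governs both boundary defects.

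Next I would introduce the correction $C(\zeta) := ((b_1+ib_2)\zeta + b)(e_1+ie_2)$, which is monogenic as a polynomial in $\zeta$. Using $e_1 + ie_2 = \rho/2$, $\rho^2 = 0$, and the decomposition $\zeta = z - \frac{iy}{2}\rho$, one finds $C(\zeta) = ((b_1+ib_2)z + b)(e_1+ie_2)$; expanding in the biharmonic basis then gives the clean identity $\mathrm{U}_1[C] = \mathrm{U}_4[C] = b_1 x - b_2 y + b$. With $x-iy = 1/z$ on the unit circle, the discrepancy $\mathrm{Im}(B_1-B_4)$ becomes a real affine function $\alpha x + \beta y + \gamma$ whose coefficients are explicit combinations of $\int (\widehat u_1 - \widehat u_4)/t^k\,dt$ for $k=2,3$. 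Because a nontrivial affine function of $(x,y)$ cannot vanish identically on $|z|=1$, demanding $\mathrm{Im}(B_1-B_4) + (b_1 x - b_2 y + b) = 0$ forces $b_1$, $b_2$, $b$ to the precise values stated. Theorem~\ref{Th:odnor-(1-4)} then supplies $a_1\,ie_1 + a_2\,e_2$ as the general homogeneous part, yielding (\ref{sol_1-4_d}).

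The main obstacle is the bookkeeping in the second step: after substituting (\ref{gr_big_Sw_c}), the products $\cdot\,e_1$ and $\cdot\,ie_2$ scramble all four basis vectors through $e_2^2 = e_1 + 2ie_2$, and correctly isolating $\mathrm{U}_1$ and $\mathrm{U}_4$ while detecting the symmetry $\mathrm{U}_1[\Psi] - u_1 = \mathrm{U}_4[\Psi] - u_4$ is the principal delicate calculation; that symmetry is precisely what makes a rank-one correction of the form $(\,\cdot\,)(e_1+ie_2)$ sufficient to absorb both boundary defects simultaneously with only three real parameters.
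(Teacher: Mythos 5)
Your proposal is correct and follows essentially the same route as the paper: form the particular solution $S_{D_{\zeta}}[u_1]e_1+S_{D_{\zeta}}[u_4]ie_2$, compute its boundary components via \eqref{gr_big_Sw_c} (your identities $\mathrm{U}_1[\Psi]-u_1=\mathrm{U}_4[\Psi]-u_4=\mathrm{Im}\,B_1(z)-\mathrm{Im}\,B_4(z)$ and $\mathrm{U}_1[C]=\mathrm{U}_4[C]=b_1x-b_2y+b$ check out), cancel the affine defect, and add the homogeneous solutions from Theorem~\ref{Th:odnor-(1-4)}. The only difference is cosmetic: the paper starts from a six-parameter affine ansatz $\bigl(b_1e_1+b_2ie_1+b_3e_2+b_4ie_2\bigr)\zeta+c_1e_1+c_2ie_2$ and solves for the coefficients, whereas you exploit the equality of the two boundary defects up front to write the three-parameter correction $\bigl((b_1+ib_2)\zeta+b\bigr)(e_1+ie_2)$ directly.
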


\begin{proof}
Let us prove that there exists a particular solution of the
(1-4)-problem in the form
  \[\Phi(\zeta)=
S_{D_{\zeta}}[u_1](\zeta)\,e_1+S_{D_{\zeta}}[u_4](\zeta)\,ie_2+\]
\begin{equation} \label{chast-sol_1-4_d}
+\Bigl(b_1e_1+b_2ie_1+b_3e_2+b_4ie_2\Bigr)\zeta+
c_1\,e_1+c_2\,ie_2\,,
 \end{equation}
where unknown coefficients  $b_1, b_2, b_3, b_4, c_1, c_2$ are need
to be found.

In order to single out components
 $U_l[\Phi]$, $l\in\{1,4\}$, of limiting values of the function (\ref{chast-sol_1-4_d}) on $\partial D_{z}${\bf,}
we use the equality (\ref{gr_big_Sw_c}) and get
 \[U_1[\Phi(\zeta)]=u_1(\zeta)+(B_1-B_4+b_1)x-(A_1-A_4-b_3)y+D_1-D_4+c_1\,, \]
 \[U_4[\Phi(\zeta)]=u_4(\zeta)+(B_1-B_4+b_4)x-(A_1-A_4-b_2-2b_3)y+D_1-D_4+c_2 \]
for $\zeta\in\partial D_{z}$, where
 \[A_l:=\frac{1}{2\pi}\,\mathrm{Re}\int\limits_{\partial
D_z}\frac{\widehat{u}_{l}(t)}{t^2}\,dt,\quad
B_{l}:=\frac{1}{2\pi}\,\mathrm{Im}\int\limits_{\partial
D_z}\frac{\widehat{u}_{l}(t)}{t^2}\,dt,\]
\[D_{l}:=\frac{1}{2\pi}\,\mathrm{Im}\int\limits_{\partial
D_z}\frac{\widehat{u}_{l}(t)}{t^3}\,dt, \qquad l\in\{1,4\}.\]

Now, it is clear that the identities $U_l[\Phi(\zeta)]\equiv
u_l(\zeta)$, $l\in\{1,4\}$, hold on $\partial D_{z}$ if
$b_4=b_1=-(B_1-B_4)$, $b_3=-b_2=A_1-A_4$, $c_1=c_2=-(D_1-D_4)$.

Finally, substituting the found values for the coefficients $b_1$,
$b_2$, $b_3$, $b_4$, $c_1$, $c_2$ to the partial solution
(\ref{chast-sol_1-4_d}) of the inhomogeneous  (1-4)-problem and
adding the general solution (\ref{form-odn-(1-4)}) of the
homogeneous (1-4)-problem, after evident identical transformations
we obtain the formula (\ref{sol_1-4_d}) for the general solution
of the (1-4)-problem for the unit disk.
\end{proof}


\end{document}